\newcounter{ENUM}
\newcommand{\be}{\begin{enumerate}}
\newcommand{\ee}{\end{enumerate}}
\newcommand{\beas}{\begin{eqnarray*}}
\newcommand{\eeas}{\end{eqnarray*}}
\newcommand{\bea}{\begin{eqnarray}}
\newcommand{\eea}{\end{eqnarray}}
\newcommand{\beq}{\begin{equation}}
\newcommand{\eeq}{\end{equation}}
\newcommand{\cals}{\mathcal{S}}
\newcommand{\sn}{\mathfrak{S}_n}
\newcommand{\pf}{\mathrm{PF}}
\newcommand{\pfn}{\mathrm{PF}_n}
\newcommand{\pfnx}{\mathrm{PF}_n(x)}
\newcommand{\fs}{\mathfrak{S}}
\newcommand{\sh}{\mathrm{SH}}
\newcommand{\shn}{\mathrm{SH}_n}
\newcommand{\shnx}{\mathrm{SH}_n(x)}
\newcommand{\ci}{{\langle -1\rangle}}
\newtheorem{theorem}{Theorem}[section]
\newtheorem{lemma}[theorem]{Lemma}
\newtheorem{corollary}[theorem]{Corollary}
\theoremstyle{definition}
\newtheorem{example}[theorem]{Example}
\theoremstyle{remark}
\numberwithin{equation}{section}
\def\cp{\mathcal{P}}
\def\nn{\mathbb{N}}
\newcommand{\qq}{\mathbb{Q}}
\begin{document}
\title{A Shifted Parking Function Symmetric Function} 

\date{\today}

\author{Richard P. Stanley}
\email{rstan@math.mit.edu}
\address{Department of Mathematics, University of Miami, Coral Gables,
FL 33124}

\maketitle

\begin{abstract}
We define a ``shifted analogue'' $\shn$ of the parking function symmetric
function $\pfn$. The expansion of $\shn$ in terms of three bases for
shifted symmetric functions is explicitly described. We don't know a
shifted analogue for parking functions themselves, but some desirable
properties of such an analogue are discussed.
\end{abstract}  

\section{Introduction}
We assume knowledge of symmetric functions such as may be found in
Macdonald \cite{macd} or Stanley \cite[Ch.~7]{ec2}. Given a symmetric
function $f(x)\in \Lambda_\qq$ (i.e., whose coefficients of monomials are
rational numbers) in variables $x=(x_1,x_2,\dots)$, define the
\emph{superfication} $f(x/y)=\omega_y f(x,y)$, where
$y=(y_1,y_2,\dots)$ is a new set of variables, and where $\omega_y$
denotes the involution $\omega$ acting on the $y$ variables only. We
then define the \emph{shiftification} of $f$ to be
$f(x/x)$. Equivalently, regarding $f$ as a polynomial in the power
sums $p_i$, we have
  \beq f(x/x) = f(p_{2i+1}\to 2p_{2i+1}, p_{2i}\to 0).
     \label{eq:fxx} \eeq
Thus $f(x/x)\in \Gamma:=\qq[p_1,p_3,p_5,\dots]$.

If $\lambda$ is a partition of $n$ into distinct parts, denoted
$\lambda\vDash n$, then let
$P_\lambda$ denote Schur's P-function indexed by $\lambda$, a shifted
analogue (but not the shiftification) of the ordinary Schur function 
$s_\lambda$ \cite[{\S}III.8]{macd}. In particular,
  $$ 2P_n(x) = h_n(x/x) = \sum_{k=0}^n e_k(x)h_{n-k}(x). $$
We also have the generating function
  \beq K(x,t):=1+2\sum_{n\geq 1}P_n(x)t^n = \prod_i
     \frac{1+x_it}{1-x_it}. \label{eq:pngf} \eeq
Moreover, the $P_{2n+1}$'s are algebraically independent and generate
$\Gamma$ as a $\qq$-algebra.
     
We now consider parking functions. A good survey of this topic was
given by C.\,H.\ Yan \cite{yan}. A \emph{parking function} of length
$n$ is a sequence $(a_1,\dots,a_n)$ of positive integers whose
increasing rearrangement $b_1\leq b_2\leq\cdots\leq b_n$ satisfies
$b_i\leq i$. Thus the symmetric group $\sn$ acts on the set $\cp_n$ of
all parking functions of length $n$ (the number of which is
$(n+1)^{n-1}$) by permuting coordinates. The Frobenius characteristic
symmetric function of this action is denoted $\pfn$, the \emph{parking
  function symmetric function}.

The symmetric function $\pfn$ has many remarkable properties. There
are simple product formulas for the coefficients when $\pfn$ is
expanded in terms any of the six bases $m_\lambda,
\mathrm{fo}_\lambda$, $h_\lambda, e_\lambda$, $p_\lambda, s_\lambda$,
where $\mathrm{fo}_\lambda$ denotes the forgotten symmetric function
$\omega(m_\lambda)$. Moreover, we have the generating function
  \beq \sum_{n\geq 0}\pfnx t^{n+1} = (tH(x,-t))^{\langle -1 \rangle},
     \label{eq:ci} \eeq
where
     \beq H(x,t)=\sum_{n\geq 0}h_n(x)t^n = \prod_i (1-x_it)^{-1},
       \label{eq:ht} \eeq
and $^{\langle -1\rangle}$ denotes compositional inverse with respect
to $t$. See for instance \cite{s-w}.

The main object of study in this paper is the shiftification of
$\pfn$, which we denote by $\shn$. Thus
  $$ \shnx = \pfn(x/x). $$
In Section~\ref{sec2} we give simple expansions of $\shn$ as linear
combinations of power 
sums $p_\lambda$ and the Schur P-functions $P_\lambda$. The proofs are
analogous to those for the expansion of $\pfn$ as a linear
combination of power sums and Schur functions, respectively. In
Section~\ref{sec:3} 
give the expansion of $\shn$ as a polynomial in $P_1,P_3,P_5,\dots$,
the analogue of expanding $\pfn$ as a polynomial in
$h_1,h_2,h_3,\dots$. The proof, however, is not analogous. It is
first necessary to express $P_{2n}$ in terms of the $P_{2i+1}$'s. The
most succinct way of expressing this relationship is the formula
  $$ \frac{-1+\sqrt{1+4A^2}}{2}=P_2t^2+P_4t^4+P_6t^6+\cdots, $$
where $A=P_1t+P_3t^3+P_5t^5+\cdots$.

What is currently missing from this development is a satisfactory
shifted analogue of parking functions themselves, not just their
corresponding symmetric functions. In the last section
(Section~\ref{sec:comb}) we discuss some desirable properties of such
an interpretation, as well as a ``naive shifted parking function''
which does not seem as interesting.

\section{The bases $p_\lambda$ and $P_\lambda$} \label{sec2}
In this section we discuss the expansion of $\shn$ in terms of the
power sums $p_\lambda$ (where $\lambda$ has odd parts) and the Schur
$P$-functions $P_\lambda$ (where $\lambda$ has distinct parts). As
mentioned in the previous section, the proofs are straightforward
variants of the corresponding results for $\pfn$.

\begin{theorem} \label{thm:powsum}
For $n\geq 1$ we have
  $$ \shn = \sum_{\substack{\lambda\vdash n\\ \lambda_i\,
    \mathrm{odd}}} z_\lambda^{-1}
    2^{\ell(\lambda)}(n+1)^{\ell(\lambda)-1} p_\lambda, $$
where $\ell(\lambda)$ denotes the length (number of $\lambda_i>0$) of
$\lambda$. 
\end{theorem}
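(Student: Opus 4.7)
The plan is to read the claim off from the classical power-sum expansion of $\pfn$ combined with the shiftification rule~(\ref{eq:fxx}). The key observation is that the substitution $p_{2i+1}\mapsto 2p_{2i+1}$, $p_{2i}\mapsto 0$ is multiplicative on $p_\lambda = p_{\lambda_1}\cdots p_{\lambda_{\ell(\lambda)}}$, sending it to $2^{\ell(\lambda)} p_\lambda$ when every part of $\lambda$ is odd and to zero otherwise. Once the expansion
$$\pfn = \sum_{\lambda \vdash n} z_\lambda^{-1} (n+1)^{\ell(\lambda)-1} p_\lambda$$
is in hand, the theorem drops out by inspection.

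To obtain that expansion I would use the Frobenius character formula
$$\pfn = \sum_{\lambda \vdash n} z_\lambda^{-1}\,\bigl|\mathrm{Fix}_{\mu_\lambda}(\cp_n)\bigr|\, p_\lambda,$$
where $\mu_\lambda\in\sn$ is any permutation of cycle type $\lambda$. A parking function is fixed by $\mu_\lambda$ precisely when it is constant on each cycle of $\mu_\lambda$, so counting reduces to enumerating tuples $(v_1,\dots,v_{\ell(\lambda)})$ of positive integers, with $v_i$ used with multiplicity $\lambda_i$, whose sorted concatenation is a parking function of length $n$. A Pollak-style cyclic-rotation argument, generalized to sequences with prescribed part-multiplicities, shows this count equals $(n+1)^{\ell(\lambda)-1}$. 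Alternatively, the same expansion is accessible by applying Lagrange inversion to the generating function~(\ref{eq:ci}).

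A second, perhaps more in the spirit of the author's remark that ``the proofs are analogous to those for $\pfn$,'' is to shiftify the generating function~(\ref{eq:ci}) itself. Since shiftification is a $\qq$-algebra homomorphism on $\Lambda$, it commutes with compositional inversion of series over $\Lambda$; and a short computation with $\log H$ shows that $H(x,-t)$ shiftifies to $K(x,-t)=K(x,t)^{-1}$, so that $\sum_{n\geq 0}\shnx t^{n+1} = (tK(x,-t))^{\langle -1\rangle}$. Lagrange inversion then yields $\shn=\frac{1}{n+1}[t^n]K(x,t)^{n+1}$, and the exponential formula applied to $K(x,t)^{n+1}=\exp\bigl(2(n+1)\sum_{k\text{ odd}} p_k t^k/k\bigr)$ produces the claimed coefficients directly.

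The main (indeed essentially the only) obstacle in the first route is the fixed-point count $(n+1)^{\ell(\lambda)-1}$; in the second route it is the verification that shiftification sends $H(x,t)$ to $K(x,t)$ and commutes with compositional inversion. After either step, the remaining algebra is bookkeeping with the substitution~(\ref{eq:fxx}).
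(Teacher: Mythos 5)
Your first route is exactly the paper's proof: it quotes the known expansion $\pfn=\sum_{\lambda\vdash n}z_\lambda^{-1}(n+1)^{\ell(\lambda)-1}p_\lambda$ (citing Stanley--Wang and \cite[Exer.~7.48(f)]{ec2} rather than reproving it) and then applies the substitution~(\ref{eq:fxx}), which multiplies $p_\lambda$ by $2^{\ell(\lambda)}$ when all parts are odd and kills it otherwise. Your additional sketches (the Pollak-style fixed-point count and the Lagrange-inversion route) are correct but go beyond what the paper does for this theorem.
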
  

\begin{proof}
We have \cite[Prop.~1.1]{s-w}\cite[Exer.~7.48(f)]{ec2} (note that in
this latter exercise, we have $\pfn=\omega F_{\mathrm{NC}_{n+1}}$)
  $$ \pfn = \sum_{\lambda\vdash n} z_\lambda^{-1}
    (n+1)^{\ell(\lambda)-1} p_\lambda. $$
Now use equation~\eqref{eq:fxx} and the definition $\shn(x) =
\pf_n(x/x)$.
\end{proof}

If $f$ is a homogeneous symmetric function of degree $n$, we define
its \emph{dimension} by $\dim f=\langle f,p_1^n\rangle$. If $f$ is the
Frobenius characteristic $\mathrm{ch}(\chi)$ of a character $\chi$ of
$\sn$, then $\dim f = \dim \chi = \chi(\mathrm{id})$. 

\begin{corollary} \label{cor:total}
We have $\dim \shn = 2^n (n+1)^{n-1}$. 
\end{corollary}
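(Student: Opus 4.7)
The plan is to extract the coefficient of $p_1^n$ from the power-sum expansion in Theorem~\ref{thm:powsum}. By definition $\dim f = \langle f, p_1^n\rangle$, and since the power sums are orthogonal with $\langle p_\lambda, p_\mu\rangle = z_\lambda\,\delta_{\lambda\mu}$, only the term indexed by $\lambda = (1^n)$ in Theorem~\ref{thm:powsum} contributes. This partition has all parts equal to $1$ (hence odd), length $\ell(\lambda) = n$, and $z_{1^n} = n!$. Substituting into the formula yields
$$\dim \shn \;=\; z_{1^n} \cdot z_{1^n}^{-1}\, 2^{n}(n+1)^{n-1} \;=\; 2^n(n+1)^{n-1}.$$

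A more conceptual viewpoint makes the result transparent: on a symmetric function $f$ of degree $n$, $\dim$ reads off only the coefficient of $p_1^n$. The shiftification substitution $p_{2i+1} \mapsto 2p_{2i+1}$, $p_{2i}\mapsto 0$ from \eqref{eq:fxx} therefore simply multiplies this coefficient by $2^n$ (one factor of $2$ for each of the $n$ copies of $p_1$), so $\dim \shn = 2^n \dim \pfn = 2^n(n+1)^{n-1}$, using the classical fact $\dim \pfn = |\cp_n| = (n+1)^{n-1}$. There is no serious obstacle here; the corollary is an immediate consequence of Theorem~\ref{thm:powsum}.
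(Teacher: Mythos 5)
Your proof is correct and follows exactly the paper's route: apply the orthogonality relation $\langle p_\mu,p_\lambda\rangle=z_\lambda\delta_{\mu,\lambda}$ to the power-sum expansion of Theorem~\ref{thm:powsum}, so that only $\lambda=(1^n)$ survives and the factors of $z_{1^n}$ cancel. The paper states this in one line; your computation just makes it explicit, and the additional remark that shiftification scales the $p_1^n$-coefficient by $2^n$ is a nice (equivalent) way to see the same thing.
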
  

\begin{proof}
Follows from Theorem~\ref{thm:powsum} and the orthogonality relation
$\langle p_\mu,p_\lambda\rangle = z_\lambda\delta_{\mu,\lambda}$
\cite[Prop.~7.9.3]{ec2}. 
\end{proof}  

For the $P_\lambda$ expansion we need the following fundamental lemma,
the shifted analogue of equation~\eqref{eq:ci}.

\begin{lemma} \label{lemma:fund}
We have
  \beq \sum_{n\geq 0}\shn(x) t^{n+1} =\left( tK(x,-t)\right)^{\langle
    -1\rangle}, \label{eq:sshn} \eeq 
where $^{\langle -1\rangle}$ denote compostional inverse with respect
to $t$, and where $K(x,t)$ is defined in equation~\eqref{eq:pngf}. 
\end{lemma}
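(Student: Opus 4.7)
The plan is to derive \eqref{eq:sshn} by applying shiftification to the known generating function identity \eqref{eq:ci} for the parking function symmetric functions, so that the heavy lifting has already been done by \eqref{eq:ci} itself.

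First, I would extend the shiftification map $f\mapsto f(x/x)$, which is a $\qq$-algebra homomorphism $\Lambda_\qq\to\Gamma$ by \eqref{eq:fxx}, coefficient-wise to formal power series in $t$ with coefficients in $\Lambda_\qq$. Since the compositional inverse of any series of the form $F(t)=t+a_2t^2+a_3t^3+\cdots$ has coefficients that are universal integer-polynomial expressions in $a_2,a_3,\ldots$ (e.g., via Lagrange inversion), this extension commutes with taking compositional inverse in $t$. In particular, applying the extension to the right-hand side of \eqref{eq:ci} amounts to shiftifying $tH(x,-t)$ and then inverting.

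Next, I would compute the shiftification of $tH(x,-t)$. Using the stated identity $h_n(x/x)=2P_n(x)$ for $n\geq 1$ (together with $h_0=1$) and the definition \eqref{eq:pngf} of $K$, one obtains
$$H(x,-t)\bigm|_{x\to x/x}=1+\sum_{n\geq 1}2P_n(x)(-t)^n=K(x,-t),$$
so the shiftification of $tH(x,-t)$ is exactly $tK(x,-t)$.

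Finally, I would apply the extended shiftification to both sides of \eqref{eq:ci}. The left-hand side becomes $\sum_{n\geq 0}\shn(x)t^{n+1}$ by the definition $\shn(x)=\pfn(x/x)$, while the right-hand side becomes $(tK(x,-t))^{\langle-1\rangle}$ by the previous two steps, giving \eqref{eq:sshn}. The only substantive point is the commutation of the coefficient-wise shiftification with compositional inversion, and this is formal once one notes that shiftification is a ring homomorphism and that Lagrange inversion expresses the inverse coefficients as polynomials (with integer coefficients) in the original ones.
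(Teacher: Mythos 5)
Your proof is correct and follows essentially the same route as the paper: shiftify the generating function identity \eqref{eq:ci}, note that shiftification (being an algebra homomorphism applied coefficient-wise) commutes with compositional inversion in $t$, and compute $H(x/x,t)=K(x,t)$ via $h_n(x/x)=2P_n(x)$. Your justification of the commutation step via the universality of the Lagrange inversion coefficients is, if anything, slightly more explicit than the paper's.
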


\begin{proof}
Shiftification is an algebra homomorphism, from
which it follows that it commutes with compositional inverse with
respect to $t$. That is, if $F(x,t)=\sum_{n\geq 0}f_n(x)t^{n+1}$, where
$f_n(x)\in\Lambda_\qq$ and $f_0(x)=1$, and if
$G(x,t)=F(x,t)^{\langle -1\rangle}$ (taken with respect to $t$), then
$F(x/x,t)^{\langle -1\rangle} =G(x/x,t)$. Now with $H(x,t)$ as in
equation~\eqref{eq:ht} we have
    \beas H(x/x,t) & = &  \sum_{n\geq 0}
      \left( \sum_{k=0}^n e_k h_{n-k} \right) t^n\\ & = &
    \left( \sum_{n\geq 0}e_nt^n\right)\left( \sum_{n\geq
      0}h_nt^n\right)\\ & = &
    \prod_i \frac{1-x_it}{1+x_it}, \eeas
Now replace $t$ by $-t$ and shiftify
equation~\eqref{eq:ci}. 
\end{proof}

The appearance of a compositional inverse in Lemma~\ref{lemma:fund}
suggests the Lagrange inversion formula could be useful. We use it in
the following form \cite[Thm.~5.4.2]{ec2}. Let $F(t)=t+\sum_{n\geq 2}
a_nt^n$ be a power series over a field of characteristic 0.\ Write
$[t^m]G(t)$ for the coefficient of $t^m$ in the power series
$G(t)$. Then for any $n\in\nn:=\{0,1,2,\dots\}$,
    \beq [t^{n+1}]F^{\langle -1\rangle}(t) = \frac{1}{n+1} [t^n] \left(
    \frac{t}{F(t)}\right)^{n+1}. \label{eq:lg2} \eeq

The next result gives the $P_\lambda$ expansion of $\shn$. We use the
notation $f(1^{n+1})$ to indicate that in $f(x)$ we set
$x_1=\cdots=x_{n+1}=1$ and $x_i=0$ for $i>n+1$. 

\begin{theorem} \label{thm:pl}
We have
  $$ \shnx = \frac{1}{n+1}\sum_{\lambda\vDash n} 2^{\ell(\lambda)}
    P_\lambda(1^{n+1})P_\lambda(x). $$
\end{theorem}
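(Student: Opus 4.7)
The plan is to combine Lemma~\ref{lemma:fund} with Lagrange inversion, and then recognize the resulting coefficient extraction as a specialization of the Cauchy identity for Schur $P$-functions.

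First I would apply the Lagrange inversion formula \eqref{eq:lg2} to the generating function in \eqref{eq:sshn}. Taking $F(t) = tK(x,-t)$, which has constant term $1$ as a power series in $t$, we get
$$ \shn(x) = [t^{n+1}]\bigl(tK(x,-t)\bigr)^{\langle -1\rangle} = \frac{1}{n+1}[t^n] K(x,-t)^{-(n+1)}. $$
Next I would simplify using the observation that $K(x,t)$ is a product of factors of the form $(1+x_it)/(1-x_it)$, so replacing $t$ by $-t$ inverts each factor; that is, $K(x,-t) = K(x,t)^{-1}$. Hence
$$ \shn(x) = \frac{1}{n+1}[t^n] K(x,t)^{n+1}. $$

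The remaining step is to expand $K(x,t)^{n+1}$ in the $P_\lambda$ basis. For this I would invoke the Cauchy identity for Schur $P$-functions (Macdonald III.8.13),
$$ \prod_{i,j}\frac{1+x_iy_j}{1-x_iy_j} = \sum_{\lambda\text{ strict}} 2^{\ell(\lambda)} P_\lambda(x)P_\lambda(y), $$
(using $Q_\lambda = 2^{\ell(\lambda)} P_\lambda$), and specialize by setting $y_1=\cdots=y_{n+1}=t$ and the remaining $y_j=0$. The left-hand side becomes $K(x,t)^{n+1}$, while on the right we have $P_\lambda(t,\ldots,t,0,\ldots) = t^{|\lambda|} P_\lambda(1^{n+1})$. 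Therefore
$$ K(x,t)^{n+1} = \sum_{\lambda\text{ strict}} 2^{\ell(\lambda)} P_\lambda(1^{n+1}) P_\lambda(x)\, t^{|\lambda|}. $$
Extracting $[t^n]$ restricts the sum to $\lambda \vDash n$, which yields the desired formula.

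I don't anticipate a major obstacle: the only substantive moves are (i) Lagrange inversion, (ii) the involution $t \mapsto -t$ acting on $K$, and (iii) recognizing the Cauchy identity specialization. The trickiest point is simply remembering the correct normalization $Q_\lambda = 2^{\ell(\lambda)} P_\lambda$ so that the factor $2^{\ell(\lambda)}$ in the statement appears automatically rather than having to be tracked by hand.
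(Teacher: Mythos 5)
Your proof is correct and follows essentially the same route as the paper: Lemma~\ref{lemma:fund} plus Lagrange inversion to get $\frac{1}{n+1}[t^n]K(x,t)^{n+1}$, then the Cauchy identity \eqref{eq:cauchy} specialized at $n+1$ equal variables. The only cosmetic difference is that you set $y_j=t$ and track degrees via $t^{|\lambda|}$, while the paper sets $y_j=1$ and extracts the degree-$n$ part implicitly by homogeneity.
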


\begin{proof}
By Lemma~\ref{lemma:fund} we have
  $$ \shnx = [t^{n+1}]\left( t\prod_i\frac{1-x_it}{1+x_it}
     \right)^\ci. $$
By equation~\eqref{eq:lg2} it follows that
  \beq \shnx = \frac{1}{n+1}[t^n]\left(
    \prod_i\frac{1+x_it}{1-x_it}\right)^{n+1}. \label{eq:shnlg} \eeq
The Cauchy identity for $P_\lambda$ \cite[(8.13)]{macd} states that
 \beq \sum_\lambda 2^{\ell(\lambda)}P_\lambda(x)P_\lambda(y) =
   \prod_i \frac{1+x_iy_j}{1-x_iy_j}, \label{eq:cauchy} \eeq
where $\lambda$ ranges over all partitions with distinct
parts. Set $y_1=\cdots=y_{n+1}=1$ and $y_i=0$ for $i>n+1$ and compare
with equation~\eqref{eq:shnlg} to complete the proof.
\end{proof}

\section{The basis $P_{\lambda_1}P_{\lambda_2}\cdots$} \label{sec:3}
In this section we write $\shn$ as a polynomial in the Schur
$P$-functions $P_1,P_3,P_5,\dots$. Before doing so, let us note that
it is very easy to write $\shn$ as a polynomial in $P_1, P_2, P_3,
\dots$ (not unique since the $P_i$'s are not algebraically
independent). For any $\lambda\vdash n$ define
  $$ V_\lambda = P_{\lambda_1}P_{\lambda_2}\cdots. $$ 
Then we can shiftify the formula
\cite[Exer.~7.48(f)]{ec2}\cite[(1.1)]{s-w} 
 $$ \pfn = \sum_{\lambda\vdash n} \frac{n(n-1)\cdots
   (n-\ell(\lambda)+2)}{m_1!\cdots m_n!}h_\lambda, $$
where $\lambda$ has $m_i$ parts equal to $i$. Since $h_i(x/x) =
2P_i(x)$, we obtain
  \beq \shn =\sum_{\lambda\vdash n}
   \frac{2^{\ell(\lambda)}n(n-1)\cdots
   (n-\ell(\lambda)+2)}{m_1!\cdots m_n!}V_\lambda. \label{eq:easy}
   \eeq 
   
We now turn to the more difficult problem of writing $\shn$ as a
polynomial in $P_1,P_3,P_5,\dots$. This representation is unique since
the $P_{2i+1}$'s are algebraically independent. The first step is to
write $P_{2n}$ as a polynomial in $P_1,P_3,P_5,\dots$.

\begin{lemma} \label{lemma:a}
Let $A=P_1t+P_3t^3+P_5t^5+\cdots$. Then
     $$ P_2t^2+P_4t^4+P_6t^6+\cdots=\frac{-1+\sqrt{1+4A^2}}{2}. $$     
\end{lemma}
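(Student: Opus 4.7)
The plan is to reduce the claimed identity to the algebraic relation
$B(B+1) = A^2$, where $B = P_2 t^2 + P_4 t^4 + P_6 t^6 + \cdots$, and then solve this quadratic for $B$. Observe that the desired formula $B = \tfrac12(-1 + \sqrt{1+4A^2})$ is equivalent to $(2B+1)^2 = 1 + 4A^2$, i.e., to $B^2 + B = A^2$. So the entire task reduces to establishing this one identity between the generating series $A$ and $B$.

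To prove $B(B+1) = A^2$, I would work with the generating function $K(x,t)$ defined in \eqref{eq:pngf}. The key observation is that the product formula $K(x,t) = \prod_i \frac{1+x_it}{1-x_it}$ immediately yields
\[
K(x,t)\,K(x,-t) = \prod_i \frac{1+x_it}{1-x_it}\cdot\frac{1-x_it}{1+x_it} = 1.
\]
Now split the sum $\sum_{n\geq 1} P_n(x) t^n$ into its odd-degree and even-degree parts; these are precisely $A$ and $B$. Then from \eqref{eq:pngf} we have $K(x,t) = 1 + 2A + 2B$ and, by replacing $t$ with $-t$, also $K(x,-t) = 1 - 2A + 2B$. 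Substituting these into $K(x,t)K(x,-t) = 1$ and expanding the product $(1+2B+2A)(1+2B-2A)$ gives
\[
(1+2B)^2 - (2A)^2 = 1,
\]
which simplifies to $B^2 + B = A^2$, as desired.

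Finally, solving $B^2 + B - A^2 = 0$ as a quadratic in $B$ yields $B = \tfrac12(-1 \pm \sqrt{1+4A^2})$. The correct sign is the $+$ sign, since $B$ is a power series starting at degree $2$ (so $B \to 0$ as $t \to 0$), whereas $\tfrac12(-1 - \sqrt{1+4A^2}) \to -1$ as $t \to 0$.

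There is no real obstacle here: once the identity $K(x,t)K(x,-t)=1$ is observed, the rest is formal manipulation. The only point requiring slight care is choosing the right branch of the square root, which is forced by the constant term of $B$.
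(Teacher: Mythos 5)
Your proof is correct and follows essentially the same route as the paper: both use the product formula to get $K(x,t)K(x,-t)=1$, substitute $K(x,\pm t)=1\pm 2A+2B$, and solve the resulting quadratic for $B$, choosing the $+$ branch. Your justification of the sign choice via the constant term of $B$ is a welcome bit of extra detail that the paper leaves implicit.
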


\begin{proof}
  We have $K(x,t)=1+2A+2B$ and $K(x,-t)=1-2A+2B$. Note also that
  $$ K(x,-t)=\frac{1}{K(x,t)} =\frac{1}{1+2A+2B}. $$
Hence 
  $$ 1-2A+2B = \frac{1}{1+2A+2B}. $$
Solving for $B$ gives
  $$ B=\frac{-1\pm \sqrt{1+4A^2}}{2}. $$
The correct sign is plus.
\end{proof}

It is now easy to give an explicit formula for $P_{2n}$ as a
polynomial in $P_1,P_3,P_5,\dots$.

\begin{corollary}
We have
  $$ P_{2n}=\sum_{\substack{\lambda\vdash 2n\\ \lambda_i\,
    \mathrm{odd}}} (-1)^{\frac 12(\ell(\lambda)-2)}
   C_{\frac 12(\ell(\lambda-2))}\binom{\ell(\lambda)}{m_1,m_3,\dots}
   V_\lambda, $$
where $C_k$ denotes a Catalan number, and where $\lambda$ has $m_i$
parts equal to $i$.   
 \end{corollary}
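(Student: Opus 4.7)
The plan is to combine Lemma~\ref{lemma:a} with the generating function for the Catalan numbers and then extract the coefficient of $t^{2n}$ via the multinomial theorem.

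Writing $B = P_2t^2+P_4t^4+P_6t^6+\cdots$, Lemma~\ref{lemma:a} says
\[ B = \frac{-1+\sqrt{1+4A^2}}{2}. \]
The standard Catalan identity $\sum_{k\geq 0}C_k x^k = (1-\sqrt{1-4x})/(2x)$ rearranges as
\[ \frac{-1+\sqrt{1-4x}}{2} = -\sum_{k\geq 0}C_k\, x^{k+1}. \]
Substituting $x = -A^2$ and simplifying signs yields
\[ B = \sum_{k\geq 0}(-1)^{k}C_k\, A^{2k+2}. \]

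Next I expand $A^{2k+2}$ by the multinomial theorem. Since $A = \sum_{i\geq 0}P_{2i+1}t^{2i+1}$, each monomial in the expansion corresponds to an ordered tuple of $2k+2$ odd positive integers, and collecting by the underlying partition $\lambda$ (with $m_i$ parts equal to $i$) gives
\[ A^{2k+2} = \sum_{\substack{\lambda_i\,\mathrm{odd}\\ \ell(\lambda)=2k+2}}\binom{2k+2}{m_1,m_3,\ldots}V_\lambda\, t^{|\lambda|}. \]
Substituting into the expression for $B$ and reading off the coefficient of $t^{2n}$, then reindexing through $k = \tfrac{1}{2}(\ell(\lambda)-2)$, produces the claimed identity.

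There is essentially no analytic obstacle here; the only things to check are that the correct branch of the square root was already fixed in the proof of Lemma~\ref{lemma:a}, and that every partition $\lambda\vdash 2n$ with odd parts automatically has an even length $\ell(\lambda)\geq 2$, so the substitution $k = (\ell(\lambda)-2)/2$ runs over every nonnegative integer exactly once as $\lambda$ varies, and no term in the outer Catalan sum is orphaned or double-counted.
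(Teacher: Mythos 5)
Your proposal is correct and follows essentially the same route as the paper: apply Lemma~\ref{lemma:a}, expand $\frac{-1+\sqrt{1+4A^2}}{2}$ via the Catalan generating function with $x=-A^2$, and then expand $A^{2k+2}$ by the multinomial theorem, using that $\ell(\lambda)$ is even for partitions of $2n$ into odd parts. Your sign $(-1)^k$ is in fact the one consistent with the stated corollary (where the exponent is $\tfrac12(\ell(\lambda)-2)=k$) and with the check $P_2=P_1^2$; the paper's own proof displays $(-1)^{k-1}$, which is a typo.
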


\begin{proof}
The well-known generating function for Catalan numbers (e.g.,
\cite[\S6.2]{ec2}) implies
   $$ \frac{-1+\sqrt{1+4A^2}}{2} = \sum_{k\geq 0}(-1)^{k-1}C_k
     A^{2k+2}. $$
Now by the multinomial theorem,
   \beas A^{2k+2} & = & (P_1t+P_3t^3+P_5t^5+\cdots)^{2k+2}\\ & = &
   \sum_{n\geq 1}\sum_{\substack{\lambda\vdash n\\
     \ell(\lambda)=2k+2\\ \lambda_i\,\mathrm{odd}}}
   \binom{2k+2}{m_1,m_2,\dots}V_\lambda t^n,
   \eeas
and the proof follows.       
\end{proof}

\begin{lemma} \label{lemma:taylor}
We have the Taylor series expansion 
  $$ (2z+\sqrt{1+4z^2})^{n+1} = 
   1+\sum_{k\geq 1} (n+1)g_k(n) \frac{z^k}{k!}, $$
where
  \beq g_k(n)=2^k (n+k-1)(n+k-3)(n+k-5)\cdots(n-k+3).
     \label{eq:gkn} \eeq
  \end{lemma}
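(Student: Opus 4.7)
The plan is to derive a second-order linear ODE satisfied by $y(z):=(2z+\sqrt{1+4z^2})^{n+1}$, extract a two-term recurrence for its Taylor coefficients $a_k:=y^{(k)}(0)$, and match it with the claimed formula.

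A direct computation shows $y'(z)=2(n+1)\,y(z)/\sqrt{1+4z^2}$, because the inner factor $2+4z/\sqrt{1+4z^2}$ simplifies to $2(2z+\sqrt{1+4z^2})/\sqrt{1+4z^2}$. Squaring to clear the radical and differentiating once more yields
\[
(1+4z^2)\,y''+4z\,y'-4(n+1)^2\,y=0.
\]
Writing $y=\sum_{k\geq 0}a_kz^k/k!$ and extracting the coefficient of $z^k/k!$ collapses this to the pure two-term recurrence
\[
a_{k+2}=4\bigl((n+1)^2-k^2\bigr)a_k=4(n+1-k)(n+1+k)\,a_k\qquad(k\geq 0),
\]
with initial data $a_0=y(0)=1$ and $a_1=y'(0)=2(n+1)$.

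To finish I would verify that $b_k:=(n+1)g_k(n)$ satisfies the same recurrence and agrees at $k=1,2$. Using the explicit product in \eqref{eq:gkn}, the ratio $g_{k+2}(n)/g_k(n)$ telescopes to $4(n+k+1)(n-k+1)$, so $b_{k+2}=4(n+1+k)(n+1-k)b_k$. Interpreting $g_1(n)=2$ as an empty product times $2^1$ gives $b_1=2(n+1)=a_1$, while $g_2(n)=4(n+1)$ gives $b_2=4(n+1)^2$, which also matches $a_2$ obtained from the recurrence at $k=0$. Induction then yields $a_k=b_k$ for all $k\geq 1$, which is the asserted expansion.

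The main obstacle is entirely notational: one must keep the product range in $g_k(n)$ straight — in particular the empty-product convention at $k=1$ — and confirm that the ratio $g_{k+2}(n)/g_k(n)$ really collapses to just the two boundary factors $4(n+k+1)(n-k+1)$. Once the first-order relation $\sqrt{1+4z^2}\,y'=2(n+1)y$ is observed, no further ideas are required.
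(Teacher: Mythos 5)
Your proof is correct, but it takes a genuinely different route from the paper. The paper only sketches an argument via Lagrange inversion: it observes that $f(z)=z(2z+\sqrt{1+4z^2})$ has compositional inverse $z/\sqrt{1+4z}$ and appeals to equation \eqref{eq:lg2} (citing a closely related exercise), so the coefficients come out of expanding $(1+4z)^{-m}$ by the binomial theorem. You instead make the function holonomic: the first-order relation $\sqrt{1+4z^2}\,y'=2(n+1)y$ yields $(1+4z^2)y''+4zy'-4(n+1)^2y=0$, hence the two-term recurrence $a_{k+2}=4(n+1-k)(n+1+k)a_k$ with $a_0=1$, $a_1=2(n+1)$, and the product formula \eqref{eq:gkn} is exactly the closed form of that recurrence (the ratio $g_{k+2}(n)/g_k(n)=4(n+k+1)(n-k+1)$ telescopes as you say, and the base cases match, with $g_1(n)=2$ as the empty product). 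I checked all of these computations; they are right, and the two parity chains $k$ even/odd are both anchored. Your approach is fully self-contained and arguably more elementary than the paper's, at the cost of having to guess-and-verify the closed form rather than derive it; the Lagrange-inversion route produces the formula directly but relies on the less obvious identity for $f^{\langle -1\rangle}$. One small cosmetic point: the division by $2y'$ after squaring is legitimate since $y'(0)=2(n+1)\neq 0$, but you can avoid squaring altogether by differentiating $\sqrt{1+4z^2}\,y'=2(n+1)y$ once and multiplying through by $\sqrt{1+4z^2}$, which gives the same second-order ODE with no division.
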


\begin{proof}
This result is surely well-known or equivalent to a well-known
result, so we just give the idea of a proof. It is very similar to
\cite[Problem~A32(a)]{cat}. The function $f(z)=z(2z+\sqrt{1+4z^2})$
has compositional inverse
  $$ f^{\langle -1\rangle}(z) = \frac{z}{\sqrt{1+4z}}. $$
The proof follows easily from equation~\eqref{eq:lg2}.
\end{proof}

We can now prove the main result of this section.

\begin{theorem} \label{thm:main}
We have
  $$ \hspace{-10em} \shn = $$
  $$\sum_{\substack{\lambda\vdash n\\ \lambda_i\,
    \mathrm{odd}}} \frac{2^{\ell}}{\ell!}
    \binom{\ell}{m_1,m_3,\dots}(n+\ell-1)
    (n+\ell-3)\cdots (n-\ell+3)V_\lambda,
    $$
where $\ell=\ell(\lambda)$ and $\lambda$ has $m_i$ parts equal to $i$.
\end{theorem}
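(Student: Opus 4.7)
The plan is to combine Lagrange inversion (applied to Lemma~\ref{lemma:fund}) with Lemma~\ref{lemma:a} so that $K(x,t)^{n+1}$ becomes exactly the expression handled by Lemma~\ref{lemma:taylor}, and then read off the $V_\lambda$-expansion via the multinomial theorem.

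First, from Lemma~\ref{lemma:fund} we have
$\sum_{n\geq 0}\shn(x) t^{n+1} = (tK(x,-t))^{\langle -1\rangle}$, so by the Lagrange inversion formula \eqref{eq:lg2},
$$\shn = \frac{1}{n+1}[t^n]\left(\frac{1}{K(x,-t)}\right)^{n+1} = \frac{1}{n+1}[t^n]\, K(x,t)^{n+1},$$
using the identity $K(x,-t)K(x,t)=1$ that appears in the proof of Lemma~\ref{lemma:a}.

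Next, write $A=P_1t+P_3t^3+\cdots$ and $B=P_2t^2+P_4t^4+\cdots$, so that $K(x,t)=1+2A+2B$. Lemma~\ref{lemma:a} gives $1+2B=\sqrt{1+4A^2}$, hence
$$K(x,t)=2A+\sqrt{1+4A^2}.$$
This is precisely the base of the power in Lemma~\ref{lemma:taylor} with $z$ replaced by $A$. Applying that lemma,
$$K(x,t)^{n+1}=1+\sum_{k\geq 1}(n+1)g_k(n)\frac{A^k}{k!},$$
so that
$$\shn=[t^n]\sum_{k\geq 1}\frac{g_k(n)}{k!}\,A^k.$$

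Finally, the multinomial theorem expands
$$A^k=\sum_{\substack{\lambda\\ \ell(\lambda)=k\\ \lambda_i\,\mathrm{odd}}}\binom{k}{m_1,m_3,\dots}V_\lambda\, t^{|\lambda|},$$
where $\lambda$ has $m_i$ parts equal to $i$. Extracting the coefficient of $t^n$ forces $|\lambda|=n$ and $\ell(\lambda)=k$, leaving
$$\shn=\sum_{\substack{\lambda\vdash n\\ \lambda_i\,\mathrm{odd}}}\frac{g_\ell(n)}{\ell!}\binom{\ell}{m_1,m_3,\dots}V_\lambda,$$
with $\ell=\ell(\lambda)$. Substituting the definition \eqref{eq:gkn} of $g_\ell(n)$ yields the stated formula.

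The main obstacle is really conceptual rather than computational: one must spot that Lemma~\ref{lemma:a} can be rearranged as $K(x,t)=2A+\sqrt{1+4A^2}$, which is exactly the form whose powers Lemma~\ref{lemma:taylor} was designed to expand. Once that identification is made, the rest is straightforward coefficient extraction, and no sign or convergence subtleties arise because everything is a formal power series in $t$ with coefficients in $\Gamma$.
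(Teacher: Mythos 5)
Your proof is correct and follows essentially the same route as the paper: Lagrange inversion reduces the problem to $\frac{1}{n+1}[t^n]K(x,t)^{n+1}$ (the paper's equation~\eqref{eq:shnlg}), Lemma~\ref{lemma:a} rewrites $K(x,t)$ as $2A+\sqrt{1+4A^2}$, and Lemma~\ref{lemma:taylor} plus the multinomial theorem finish the computation. The only cosmetic difference is that you rederive \eqref{eq:shnlg} rather than citing it from the proof of Theorem~\ref{thm:pl}.
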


\begin{proof}
By Lemma~\ref{lemma:a} we have
   $$ K(x,t)= 1+2A+2\left( \frac{-1+\sqrt{1+4A^2}}{2}\right) =
    2A+\sqrt{1+4A^2}. $$
Hence by equation~\eqref{eq:shnlg} and Lemma~\ref{lemma:taylor},
  \beas \shnx & = & \frac{1}{n+1}[t^n](2A+\sqrt{1+4A^2})^{n+1}\\
  & = & [t^n] \sum_{k\geq 1}g_k(n) \frac{A^k}{k!}, \eeas
where $g_k(n)$ is given by equation~\eqref{eq:gkn}.   
Now expand each $A^k$ by the multinomial theorem and extract the
coefficient of $t^n$,
\end{proof}

Let $r_n$ denote a large Schr\"oder number (see
e.g.\ \cite[A006318]{oeis}\cite[Exer.~6.39]{ec2}). They play a role
for $\shn$ similar to Catalan numbers for
$\pfn$. Corollary~\ref{cor:schr} below gives some occurrences of
$r_n$, but first we need a lemma.

\begin{lemma} \label{lemma:sumcoef}
Let $f\in \Gamma$, and assume that $f$ is homogeneous of degree
$n$. Consider the expansions
  $$ f = \sum_{\substack{\lambda\vdash n\\
      \lambda_i\,\mathrm{odd}}} a_\lambda V_\lambda =
    \sum_{\lambda\vDash n} b_\lambda P_\lambda. $$
Then $\sum_\lambda a_\lambda = b_n$, where $b_n$ is short for
$b_{(n)}$. 
\end{lemma}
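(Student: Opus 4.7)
The plan is to evaluate both expansions of $f$ at the specialization $\phi:\Gamma\to\qq$ induced by setting $x_1=1$ and $x_i=0$ for $i\geq 2$. Since $\phi$ is an algebra homomorphism, applying it to both expansions of $f$ reduces the claim to computing $\phi(V_\lambda)$ for $\lambda$ with odd parts and $\phi(P_\mu)$ for $\mu$ with distinct parts.

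For the $V$-expansion, I would note that under $\phi$ the generating function \eqref{eq:pngf} collapses to $(1+t)/(1-t)=1+2\sum_{k\geq 1}t^k$, so $\phi(P_k)=1$ for every $k\geq 1$. Hence $\phi(V_\lambda)=\prod_j\phi(P_{\lambda_j})=1$ for every $\lambda$ with odd parts, and applying $\phi$ to $f=\sum_\lambda a_\lambda V_\lambda$ gives $\phi(f)=\sum_\lambda a_\lambda$.

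For the $P$-expansion, I would specialize the Cauchy identity \eqref{eq:cauchy} by taking $y_1=1$ and $y_j=0$ for $j\geq 2$, yielding
$$ \sum_\mu 2^{\ell(\mu)}\phi(P_\mu)\,P_\mu(x) = \prod_i\frac{1+x_i}{1-x_i} = K(x,1) = 1+2\sum_{n\geq 1}P_n(x). $$
Since the $P_\mu$ (over distinct-parts $\mu$) form a $\qq$-basis of $\Gamma$, comparing coefficients gives $\phi(P_{(n)})=1$ for every $n\geq 1$ and $\phi(P_\mu)=0$ whenever $\ell(\mu)\geq 2$. Applying $\phi$ to $f=\sum_\mu b_\mu P_\mu$ thus yields $\phi(f)=b_{(n)}=b_n$, and equating the two computations gives $\sum_\lambda a_\lambda=b_n$.

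The only nontrivial input is the identification of $\phi$ on the $P_\mu$ basis, which follows cleanly from the Cauchy identity; no further machinery is required.
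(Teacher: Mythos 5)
Your proof is correct and follows essentially the same route as the paper: both evaluate the two expansions under the algebra homomorphism $x_1=1$, $x_i=0$ for $i\geq 2$, use equation~\eqref{eq:pngf} to get $P_k(1)=1$ (hence $V_\lambda(1)=1$), and use the specialized Cauchy identity~\eqref{eq:cauchy} to show $P_\mu(1)=\delta_{\mu,(n)}$ on the distinct-parts basis. Nothing further is needed.
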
  

\begin{proof}
Let $f(1)= f(x_1=1,
x_2=x_3=\cdots=0)$. Thus the map $f(x)\mapsto f(1)$ is an algebra
homomorphism $\Gamma\to\qq$.  Putting $x=(1,0,0,\dots)$ in
equation~\eqref{eq:pngf} shows that $P_n(1)=1$ for all $n\geq
1$. Hence $V_\lambda(1)=1$ for all partitions $\lambda$. It follows
that $f(1)$ is the sum of the coefficients when $f(x)$ is written as a
polynomial in $P_1,P_3,P_5,\dots$.

Let $\lambda$ be a partition with distinct parts. We claim that
   $$ P_\lambda(1)=\left\{ \begin{array}{rl} 1, & \lambda=(n)\\
       0, & \mathrm{otherwise}. \end{array} \right\} $$
One way to see this is to put $x=(1,0,0,\dots)$ in the
shifted Cauchy identity (equation~\eqref{eq:cauchy}) and compare with
equation~\eqref{eq:pngf}. Another proof follows from the combinatorial
interpretion of $P_\lambda$ in terms of shifted Young tableaux (e.g.,
\cite[8.16]{macd}). It follows that $f(1)$ is the coefficient of $P_n$
when $f(x)$ is written as a linear combination of $P_\lambda$'s, where
$\lambda\vDash n$, and the proof follows.
\end{proof}
  
\begin{corollary} \label{cor:schr}
The following numbers are equal to $r_n$.
 \begin{itemize}
   \item $\frac{2}{n+1}P_n(1^{n+1})$
   \item the coefficient of $P_n$ when $\shn$ is written as a linear
     combination of the $P_\lambda$'s, where $\lambda\vDash n$
   \item the sum of the coefficients of $\shn$ when written as a
     polynomial in the $P_k$'s ($k$ odd) 
 \end{itemize}
\end{corollary}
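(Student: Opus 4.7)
The plan is to show the chain of equalities $(1)=(2)=(3)=r_n$, with the first two equalities reading off immediately from earlier results and the substantive work being the identification with the Schr\"oder number. Concretely, in Theorem~\ref{thm:pl} the partition $\lambda=(n)$ is the unique distinct-parts partition of $n$ of length $1$, so the coefficient of $P_n(x)$ in the expansion of $\shnx$ is exactly $\tfrac{1}{n+1}\cdot 2\cdot P_n(1^{n+1})=\tfrac{2}{n+1}P_n(1^{n+1})$, giving $(1)=(2)$. The equality $(2)=(3)$ is then the content of Lemma~\ref{lemma:sumcoef} applied to $f=\shn\in\Gamma$.

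To finish, I would prove that $\tfrac{2}{n+1}P_n(1^{n+1})=r_n$. Setting $x_1=\cdots=x_{n+1}=1$ and $x_i=0$ for $i>n+1$ in equation~\eqref{eq:pngf} collapses $K(1^{n+1},t)$ to $\bigl((1+t)/(1-t)\bigr)^{n+1}$, and extracting $[t^n]$ from both sides of $K(1^{n+1},t)=1+2\sum_{m\geq 1}P_m(1^{n+1})t^m$ gives $2P_n(1^{n+1})=[t^n]\bigl((1+t)/(1-t)\bigr)^{n+1}$. Hence the quantity in question is $\tfrac{1}{n+1}[t^n]\bigl((1+t)/(1-t)\bigr)^{n+1}$, which by the Lagrange inversion formula~\eqref{eq:lg2} equals $[t^{n+1}]F^{\langle -1\rangle}(t)$ for $F(t)=t(1-t)/(1+t)$ (so that $t/F(t)=(1+t)/(1-t)$).

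The remaining step is to identify $F^{\langle -1\rangle}(t)$ with $t\sum_{n\geq 0}r_n t^n$. Solving $F(y)=t$ as a quadratic $y^2+(t-1)y+t=0$ and choosing the branch with $y(0)=0$ yields $F^{\langle -1\rangle}(t)=\bigl(1-t-\sqrt{1-6t+t^2}\bigr)/2$, which is $t$ times the classical generating function $R(t)=\bigl(1-t-\sqrt{1-6t+t^2}\bigr)/(2t)$ for the large Schr\"oder numbers; therefore $[t^{n+1}]F^{\langle -1\rangle}(t)=[t^n]R(t)=r_n$. The main (and only nontrivial) obstacle is this final identification, but it reduces to a quadratic; alternatively one can simply cite the well-known closed form $r_n=\tfrac{1}{n+1}[t^n]\bigl((1+t)/(1-t)\bigr)^{n+1}$ from the literature on Schr\"oder numbers.
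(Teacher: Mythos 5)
Your proposal is correct and follows essentially the same route as the paper: the identity $\tfrac{2}{n+1}P_n(1^{n+1})=r_n$ via Lagrange inversion applied to $F(t)=t(1-t)/(1+t)$ and the Schr\"oder generating function $\tfrac12\bigl(1-t-\sqrt{1-6t+t^2}\bigr)$, the equality of the first two items via Theorem~\ref{thm:pl}, and the equality of the last two via Lemma~\ref{lemma:sumcoef}. The only difference is the order in which the three equalities are chained together, which is immaterial.
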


\begin{proof}
From equation~\eqref{eq:pngf} we have
  \beq 1+2\sum_{n\geq 1}P_n(1^{n+1})t^n =
    \left(\frac{1+t}{1-t}\right)^{n+1}. \label{eq:lg1} \eeq
In equation~\eqref{eq:lg2}
set $F(t) = t(1-t)/(1+t)$. By equation~\eqref{eq:lg1} the right-hand
side of equation~\eqref{eq:lg2} becomes $2P_n(1^{n+1})/(n+1)$. Now
   \beas \left( \frac{t(1-t)}{1+t}\right)^{\langle -1\rangle} & = &
     \frac{1-t-\sqrt{1-6t+t^2}}{2}\\ & = & \sum_{n\geq 0} r_n t^{n+1},
       \eeas
and the proof of the first item follows.

The second item follows immediately from the first item and
Theorem~\ref{thm:pl}.

Lemma~\ref{lemma:sumcoef} shows that items two and three are equal, so
the proof follows.
\end{proof}

%note that since the $P_{2n+1}$'s are algebraic
%independent, there is an algebra homomorphism $\varphi\colon
%\qq[p_1,p_3,\dots] \to \qq$ satisfying $\varphi(P_{2n+1})$ for all
%$n\geq 1$. Apply $\varphi$ to equation~\eqref{eq:sshn}. Since
%$\varphi$ commutes with compositional inverse (because the composition
%of generating functions is defined in terms of the algebra
%operations), we get from equation~\eqref{eq:pngf} and
%Lemma~\ref{lemma:a} that
%   \beas \sum_{n\geq 0}\varphi(\shnx)t^{n+1} & = &
%   \left(t(1-\varphi(2A)+\varphi(-1+\sqrt{1+4A^2}))\right)^\ci\\
%   & = & 
%    \left( -\frac{2t^2}{1-t^2}+t\sqrt{1+\frac{4t^2}{(1-t^2)^2}}
%      \right)^\ci\\ & = & \left(\frac{t(1-t)}{1+t}\right)^\ci\\
%      & = & \frac{1-t-\sqrt{1-6t+t^2}}{2}\\ & = &
%      \sum_{n\geq 0}r_n t^{n+1}, \eeas
%and the proof follows.      

The third item in Corollary~\ref{cor:schr} suggests the following
question. By Theorem~\ref{thm:main} the coefficients of $\shn$, when
written as a polynomial in the $P_k$'s ($k$ odd), are nonnegative. Do
these coefficients have a combinatorial interpretation refining some
combinatorial interpretation of $r_n$? Here is a table of some of
these polynomials:
   \beas \sh_1 & = & 2P_1\\ \sh_2 & = & 6P_1^2\\
    \sh_3 & = & 20P_1^3 + 2P_3\\ 
    \sh_4 & = & 70P_1^4 + 20P_3P_1\\ 
    \sh_5 & = & 252P_1^5+140P_3P_1^2+2P_5\\ 
    \sh_6 & = & 924P_1^6 + 840P_3P_1^3 +28P_5P_1 + 14P_3^2. \eeas
Note that the coefficient of $P_1^n$ in $\shn$ is $\binom{2n}{n}$, a
special case of Theorem~\ref{thm:main}.
 
\section{A combinatorial interpretation?} \label{sec:comb}
As mentioned in the Introduction, the one feature missing from this
development is the concept of a shifted parking function. We can give
a ``naive'' definition based on equation~\eqref{eq:easy}, but, as we
discuss below, what is really wanted is a definition based on
Theorem~\ref{thm:main}.

The naive definition is the following. A \emph{naive shifted parking
  function} (NSPF) $\pi$ of length $n$ is an ordinary parking function of
length $n$ with each term colored red or blue. Since there are
$(n+1)^{n-1}$ ordinary parking functions of length $n$, there are
$2^n(n+1)^{n-1}$ NSPF's of length $n$, which is what we want by
Corollary~\ref{cor:total}.

By equation~\eqref{eq:easy} we would like to partition the set
$\mathcal{N}_n$ of NSPF's of length $n$ into $r_n$ blocks such that
(a) each block $B$ is associated with a partition
$\lambda=\lambda(B)\vdash n$, (b) the size of $B$ is given by
  $$ \#B = \dim V_\lambda = 2^{n-\ell(\lambda)}
    \binom{n}{\lambda_1,\lambda_2,\dots}, $$
and (c) the number of blocks $B$ which correspond to $\lambda$ is
equal to the coefficient of $V_\lambda$ in the right-hand side of
equation~\eqref{eq:easy}. We can do this as follows: a block $B$
consists of all NSPF's $\sigma$ with specified part multiplicities
(i.e., we specify for each $i$ the number $a_i$ of $i$'s in $\sigma$)
and specified colors for the first (leftmost) occurrence of each part
equal to $i$ ($1\leq i\leq n$). The partition $\lambda$ consists of the
$a_i$'s arranged in weakly decreasing order.

\begin{example}
Let $n=2$. There are 12 NSPF's of length two. The blocks are listed
below. We use an overline for the color red and no overline for blue.
  $$ \{ 11, 1\bar{1}\},\ \{\bar{1}1,\bar{1}\bar{1}\},\
    \{12,21\},\ \{\bar{1}2,2\bar{1}\},\ \{1\bar{2},\bar{2}1\},\
    \{\bar{1}\bar{2},\bar{2}\bar{1}\} $$
All the blocks have size two since $\dim P_1^2=\dim P_2=2$.    
\end{example}
  
%For instance, let $n=3$. We list below all the increasing ordinary
%parking functions $\sigma$ of length three, followed by a triple
%$(a,b,c)$. Here $a$ is the number of possible colorings, $b$ is the
%number of possible circlings, and $c$ is the number of permutations of
%the terms of $\sigma$ (a multinomial coefficient). Note that $a$ and
%$b$ remain invariant under any permutation.
%    $$ \begin{array}{cc}
%     111 & (2,4,1)\\ 112 & (4,2,3)\\ 122 & (4,2,3)\\ 112 & (4,2,3)\\
%     123 & (8,1,6) \end{array} $$
%The total number of naive shifted parking functions is obtained by
%summing $abc$ over all increasing parking function. By
%equation~\eqref{eq:easy} and Corollary~\ref{cor:total}, this number is
%$2^n (n+1)^{n-1}$. Thus for $n=3$ we get
%  $$ 2\cdot 4\cdot 1 + 3(4\cdot 2\cdot 3) +8\cdot 1\cdot 6 = 2^3 4^2
%     = 128. $$
%Similarly, the number of increasing colored (but not circled) naive
%parking functions of length $n$ is $r_n$. For $n=3$ we get
%$2+4+4+4+8=22$. 
%
%Is there a nice scenario involving parking cars that is associated
%with naive shifted parking functions?

We now consider ``serious'' shifted parking functions.
Let
$\cals_n$ be the (putative) set of all shifted parking functions of
``size'' $n$. It should have the following property: there is a
partition $\psi$ of $\cals_n$ such that each block $B$ corresponds
in a natural way to a partition $\lambda$ of $n$ into odd parts. The
number of elements of $B$ is the coefficient of
$P_{\lambda_1}P_{\lambda_2}\cdots$ when $\shn$ is written as a
polynomial in the $P_i$'s for $i$ odd (given by
Theorem~\ref{thm:main}). The size $\#B$ of $B$ is equal to
  $$ \dim V_\lambda =
2^{n-\ell(\lambda)}\binom{n}{\lambda_1,\lambda_2,\dots}. $$ This
implies that the total number of blocks is $r_n$ and that
$\#\cals_n=2^n(n+1)^{n-1}$. The fundamental difference with NSPF's
$\sigma$ is that the partition $\lambda$ associated
with a shifted parking function has only odd parts.

Also desirable would be a naive shifted analogue or shifted analogue
of the action of $\sn$ on parking functions. Ideally we would have a
suitable action of the double cover (for $n\geq 4$)
$\widetilde{\fs}_n$ of $\sn$ on the complex vector space with basis
$\mathcal{N}_n$ or $\cals_n$. Probably this is too much to hope for.

\end{document}